\numberwithin{equation}{section}
\newtheorem{Theorem}{Theorem}[section]
\newtheorem{Lemma}[Theorem]{Lemma}
{ \theoremstyle{definition}
\newtheorem{Definition}[Theorem]{Definition}
\newtheorem{Example}[Theorem]{Example}}
\begin{document}


\newcommand{\arXivNumber}{1407.6020}

\renewcommand{\PaperNumber}{082}

\FirstPageHeading

\ShortArticleName{Equivariant Join and Fusion of Noncommutative Algebras}

\ArticleName{Equivariant Join and Fusion\\ of Noncommutative Algebras}

\Author{Ludwik {D\c{A}BROWSKI}~$^\dag$, Tom {HADFIELD}~$^\ddag$  and Piotr M.~{HAJAC}~$^\S$}

\AuthorNameForHeading{L.~D\c{a}browski, T.~Hadf\/ield and P.M.~Hajac}

\Address{$^\dag$~SISSA (Scuola Internazionale Superiore di Studi Avanzati), \\
\hphantom{$^\dag$}~Via Bonomea 265, 34136 Trieste, Italy}
\EmailD{\href{mailto:dabrow@sissa.it}{dabrow@sissa.it}}

\Address{$^\ddag$~G-Research, Whittington House, 19-30 Alfred Place, London WC1E 7EA, UK}
\EmailD{\href{mailto:Thomas.Daniel.Hadfield@gmail.com}{Thomas.Daniel.Hadfield@gmail.com}}

\Address{$^\S$~Institytut Matematyczny, Polska Akademia Nauk,\\
\hphantom{$^\S$}~ul.~\'Sniadeckich 8, 00-656 Warszawa, Poland}
\EmailD{\href{mailto:pmh@impan.pl}{pmh@impan.pl}}
\URLaddressD{\url{http://www.impan.pl/~pmh/}}

\ArticleDates{Received June 30, 2015, in f\/inal form October 03, 2015; Published online October 13, 2015}

\Abstract{We translate the concept of the join of topological spaces to the language of \mbox{$C^*$-algebras},
replace the $C^*$-algebra of functions on the interval $[0,1]$ with evaluation maps at~$0$ and~$1$
by a unital $C^*$-algebra $C$ with appropriate two surjections, and introduce the notion of the \emph{fusion}
 of unital $C^*$-algebras.
An appropriate modif\/ication of this construction yields
the fusion comodule algebra of a comodule algebra $P$ with the coacting Hopf algebra~$H$.
We prove that, if the comodule
algebra $P$ is principal, then so is the fusion comodule algebra.
When $C=C([0,1])$ and the two surjections are evaluation maps at $0$ and~$1$, this result
 is a noncommutative-algebraic incarnation of the fact that, for a compact Hausdorf\/f principal $G$-bundle
$X$, the diagonal action of $G$ on the join $X*G$   is free.}

\Keywords{$C^*$-algebras; Hopf algebras; free actions}

\Classification{46L85; 58B32}

\section{Introduction}

The join of topological spaces is a crucial concept in algebraic topology~--
it is used in the celebrated Milnor's construction
of a universal principal bundle~\cite{m-j56}.  The goal of this paper is to provide
a~noncomutative-geometric version
 of the join of a Cartan compact Hausdorf\/f principal bundle (no local triviality assumed)
with its structure group. In particular, we obtain  this way a noncommutative version
of the   diagonal $G$-action  on the $n$-fold join $G*\cdots*G$
of a compact Hausdorf\/f topological group $G$, which is the
f\/irst step in Milnor's construction.

To make this paper self-contained and to establish notation and terminology, we begin by recalling
the basics of classical joins, strong connections \cite{h-pm96}
and principal comodule algebras~\cite{hkmz11}.
In the f\/irst section, we def\/ine the join and fusion
$C^*$-algebra  of an arbitrary pair of unital \mbox{$C^*$-algebras}. For commutative $C^*$-algebras our join
construction recovers
Milnor's def\/inition in the case of compact Hausdorf\/f topological spaces. In the second section, we deal with a
join and fusion of a comodule algebra $P$ with the coacting
Hopf algebra~$H$.  To def\/ine an $H$-comodule algebra structure that  corresponds
to the diagonal action in the classical setting,
  we provide a~``gauged'' algebraic version of the join and fusion $C^*$-algebras.
(The gauged $C^*$-algebraic versions of the join and fusion are given respectively in~\cite{bdh} and~\cite{ddhw}.)
 The~main result of this paper is Theorem~\ref{main} concluding the principality of the fusion comodule algebra
of $P$ and $H$ from the principality
of~$P$.

\section{Classical join construction}

Let $I=[0,1]$ be the closed unit interval and let $X$ be a topological space. The \emph{unreduced suspension} $\Sigma X$
of $X$ is the quotient of $I\times X$ by the equivalence relation $R_S$ generated by
\begin{gather*}
(0,x)\sim (0,x'),\qquad (1,x)\sim (1,x').
\end{gather*}
Now take another topological space $Y$ and, on the space $I\times X\times Y$, consider the
 equivalence relation  $R_J$ given by
\begin{gather*}
(0,x,y)\sim (0,x',y),\qquad (1,x,y)\sim (1,x,y').
\end{gather*}
The quotient space
$X * Y:=(I\times X\times Y)/R_J$
is called the \emph{join} of $X$ and $Y$. It resembles the unreduced suspension of $X\times Y$,
but with only $X$ collapsed at 0, and only $Y$
collapsed at~1.
In particular, if $Y$ is a one-point space, the join $X*Y$ is  the {\em cone} $CX$ of~$X$.
If $Y$ is a two-point space with discrete topology, then the join $X*Y$ is the unreduced suspension~$\Sigma X$
of~$X$.
\begin{figure}[h]
\centering
\includegraphics[width=105mm]{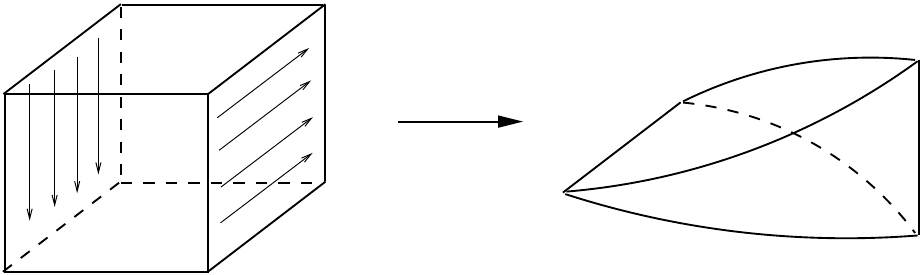}
\end{figure}

If $G$ is a topological group acting  continuously on  $X$ and $Y$ from the right,
then the diagonal right \mbox{$G$-action} on $X\times Y$ induces a  continuous action on the join $X*Y$.
 Indeed,
the diagonal action of $G$ on $I\times X\times Y$ factorizes to
the quotient, so that the formula
\begin{gather*}
([(t,x,y)],g)\longmapsto [(t,xg,yg)]
\end{gather*}
makes $X*Y$ a right $G$-space. It is immediate that this continuous action is free if the $G$-actions on $X$ and $Y$ are free.

Consider $CX\times Y$,  \mbox{$X\times CY$} and $X\times Y$ as $G$-spaces with the diagonal $G$-actions.
Note that there is a continuous surjection
$
\pi_I\colon X*Y \ni[(t,x,y)] \mapsto t\in [0,1]
$
such that $\pi_I^{-1}([0,\frac{1}{2}])$ and $\pi_I^{-1}([\frac{1}{2}, 1])$
are $G$-equivariantly homeomorphic  to $CX\times Y$ and \mbox{$X\times CY$} respectively.
Thus  $X*Y$ is $G$-equivariantly homeomorphic to the gluing  of $CX\times Y$ and \mbox{$X\times CY$}
over $X\times Y$:
\begin{gather*}
X*Y \cong (CX\times Y) \underset{X\times Y}{\sqcup} (X\times CY).
\end{gather*}

If $Y=G$ with the right action assumed to be the group multiplication,
we can construct the join $G$-space $X * Y$ in a  dif\/ferent manner: at 0 we collapse $X \times G$ to $G$ as
before,
and at~1 we collapse $X \times G$  to $(X\times G)/R_D$ instead of~$X$. Here $R_D$ is the equivalence
relation generated by
$(x,h)\sim (x',h')$, where $xh=x'h'$.
More precisely, let
 $R_J'$ be the  equivalence relation on $I \times X\times G$ generated by
\begin{gather}\label{5}
(0,x,h)\sim (0,x',h)\qquad \text{and}\qquad (1,x,h)\sim (1,x',h'), \qquad \text{where}\quad xh=x'h'.
\end{gather}
The formula $[(t,x,h)]k:=[(t,x,hk)]$ def\/ines a continuous right $G$-action on $(I \times X\times G)/R_J'$.
One can also easily check that the formula{\samepage
\begin{gather}\label{6}
X*G\ni [(t,x,h)] \longmapsto \big[\big(t,xh^{-1},h\big)\big] \in (I\times X\times G)/R_J'
\end{gather}
yields a $G$-equivariant homeomorphism.}

If we further specify also $X=G$ with the right action assumed to be the group multiplication,
  then the $G$-action on $X*Y=G * G$ is automatically free.  Furthermore, since the  action of~$G$ on
 $X*G$ is free whenever it is free on~$X$,
we conclude that the natural action on the iterated join of~$G$ with itself is also free.
For instance, for $G={\mathbb Z}/2{\mathbb Z}$ we obtain a~${\mathbb Z}/2{\mathbb Z}$-equivariant identif\/ication
$({\mathbb Z}/2{\mathbb Z})^{*(n+1)}\cong S^{n}$,
where $S^{n}$ is the $n$-dimensional sphere with the antipodal action of~${\mathbb Z}/2{\mathbb Z}$.

\section{Strong connection and principal comodule algebra}\label{sc}

Let $H$ be a Hopf algebra with coproduct~$\Delta$, counit~$\varepsilon$ and bijective
antipode~$S$. Next, let \mbox{$\delta\colon P\to P\otimes H$} be a coaction making $P$ a right
\mbox{$H$-comodule} algebra.
We shall frequently use
the Heyneman--Sweedler notation (with the summation sign suppressed) for coproduct and coaction:
\begin{gather*}
\Delta (h)=:h_{(1)}\otimes h_{(2)} ,\qquad
\delta(p)=:p_{(0)}\otimes p_{(1)} .
\end{gather*}
\begin{Definition}[\cite{hkmz11}]
Let $P$ be a right comodule algebra over a Hopf algebra
$H$ with bijective antipode, and let
\begin{gather*}
B:=P^{\operatorname{co}H} := \{ p \in P \, | \, \delta (p) = p \otimes 1 \}.
\end{gather*}
be the coaction-invariant
subalgebra. The comodule algebra $P$
is called {\em principal} if the following conditions are satisf\/ied:
\begin{enumerate}\itemsep=0pt
\item[1)]
the coaction of $H$ is Hopf--Galois, that is, the map
\begin{gather*}
\operatorname{can}_P \colon \  P \underset{B}{\otimes} P \longrightarrow P \otimes H,\qquad
	p \otimes q \longmapsto pq_{(0)} \otimes q_{(1)} ,
\end{gather*}
(called the canonical map) is bijective,
\item[2)]
the comodule algebra $P$ is right
$H$-equivariantly projective as a left  $B$-module, i.e.,
 there exists
 a right $H$-colinear and left $B$-linear
splitting of the multiplication map
$B \otimes P \rightarrow P$.
\end{enumerate}
\end{Definition}

\begin{Definition}[\cite{bh04}]\label{strong}
Let $ H$ be a Hopf algebra with  bijective antipode.
A \emph{strong connection $\ell$} on a right $H$-comodule algebra $P$ is a unital linear map $\ell\colon  H
\rightarrow  P \otimes  P$ satisfying
the following bicolinearity (i.e., left and right colinearity) and splitting conditions:
\begin{enumerate}\itemsep=0pt
\item[1)]
$(\operatorname{id}\otimes \delta) \circ
\ell = (\ell \otimes \operatorname{id}) \circ \Delta$,
$(\delta^L \otimes \operatorname{id}) \circ
\ell = (\operatorname{id} \otimes \ell) \circ
\Delta$, where
$\delta^L :=(S^{-1}\otimes\operatorname{id})\circ\mathrm{f\/lip}\circ\delta$;
\item[2)]
$\widetilde{\operatorname{can}_P} \circ \ell=1 \otimes \operatorname{id}$, where
$\widetilde{\operatorname{can}_P}\colon  P\otimes  P\ni p\otimes q\mapsto (p\otimes 1)\delta(q)\in  P\otimes H$.
\end{enumerate}
\end{Definition}

We will use the  Heyneman--Sweedler-type notation
$
\ell(h)=:\ell(h)^{\langle 1 \rangle}\otimes
 \ell(h)^{\langle 2 \rangle}
$
 with the summation sign suppressed.
 One can easily prove (see \cite[p.~599]{hkmz11} and references therein) that
a~comodule algebra is principal if and only if it admits a strong
connection.
Here we need the following slight generalization of this fact:
\begin{Lemma}\label{general}
Let $H$ be a Hopf algebra with bijective antipode. Then a right $H$-comodule algebra~$P$ is principal if and
only if it admits a~$($not necessarily unital$)$ linear map $\ell \colon H \rightarrow  P \otimes  P$ satisfying the
bicolinearity and splitting conditions of Definition~{\rm \ref{strong}}.
\end{Lemma}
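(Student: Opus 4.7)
The easy direction is immediate: a unital strong connection in the sense of Definition~\ref{strong} is \emph{a fortiori} a linear map $\ell\colon H\to P\otimes P$ satisfying the bicolinearity and splitting conditions of that definition.

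For the substantial direction, the plan is to show that the standard proof that a strong connection implies principality (reviewed in \cite[p.~599]{hkmz11} and the references therein) never invokes the unitality clause $\ell(1)=1\otimes 1$. Given a not-necessarily-unital $\ell$ satisfying the two colinearity conditions and the splitting condition, I will exhibit both of the structures required by the definition of principality via the usual formulas
\begin{gather*}
\tau(p\otimes h):=p\,\ell(h)^{\langle 1\rangle}\otimes_B\ell(h)^{\langle 2\rangle},\qquad s(p):=p_{(0)}\,\ell\bigl(p_{(1)}\bigr)^{\langle 1\rangle}\otimes\ell\bigl(p_{(1)}\bigr)^{\langle 2\rangle},
\end{gather*}
and verify that $\tau$ is a two-sided inverse of $\operatorname{can}_P$ and that $s$ is a right $H$-colinear, left $B$-linear section of the multiplication map $B\otimes P\to P$.

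The workhorse identity is
\begin{gather*}
\ell(h)^{\langle 1\rangle}\ell(h)^{\langle 2\rangle}=\varepsilon(h)\,1,
\end{gather*}
obtained by applying $\operatorname{id}\otimes\varepsilon$ to the splitting condition $\widetilde{\operatorname{can}_P}(\ell(h))=1\otimes h$ and using counitality of $\delta$. With this in hand, $\operatorname{can}_P\circ\tau=\operatorname{id}$ is a direct rewriting of the splitting condition, and $\mu\circ s=\operatorname{id}_P$ follows by the same identity combined with counitality. To finish $\tau\circ\operatorname{can}_P=\operatorname{id}$, and to guarantee that $s(p)$ in fact lies in $B\otimes P$, I need the fact that $p_{(0)}\ell(p_{(1)})^{\langle 1\rangle}\in B$; this I will extract from left $H$-colinearity of $\ell$ together with the antipode axioms via the standard Sweedler-notation calculation. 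Once the element is known to be coaction-invariant, it can be moved across $\otimes_B$ in the first verification and the workhorse identity collapses the remaining expression. Right $H$-colinearity of $s$ drops out of right colinearity of $\ell$ and coassociativity of $\delta$, and left $B$-linearity is visible from the definition.

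The single point demanding any care is the $B$-membership check described above: it is the one place where the bicolinearity hypothesis does nontrivial work. Everywhere else the verifications are formal Heyneman--Sweedler manipulations in which the value of $\ell(1)$ plays no role, so a non-unital $\ell$ already suffices to witness principality and the equivalence follows.
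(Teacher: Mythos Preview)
Your proposal is correct and follows essentially the same route as the paper: define the candidate inverse $\tau$ (the paper calls it $L$) and the section $s$ by the standard formulas, extract $m\circ\ell=\varepsilon$ from the splitting condition, use left colinearity of $\ell$ to show that $p_{(0)}\ell(p_{(1)})^{\langle 1\rangle}\in B$, and verify the remaining identities formally. For the other implication the paper, like you, simply invokes the known equivalence of principality with the existence of a (unital) strong connection, citing \cite[Lemma~2.2]{bh04}.
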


\begin{proof}
Assume that $\ell \colon H \rightarrow  P \otimes  P$ is a linear map satisfying the bicolinearity and splitting
conditions.
Let $\pi_B\colon P\otimes P\to P\otimes_BP$ be the canonical surjection. Def\/ine
\begin{gather*}
L\colon \ P\otimes H\ni p\otimes h\longmapsto \pi_B\big(p \ell(h)^{\langle 1\rangle}\otimes\ell(h)^{\langle 2\rangle}\big)\in P\underset{B}{\otimes}P.
\end{gather*}
It follows immediately from the  splitting property of $\ell$ that
\begin{gather*}
\operatorname{can}_P(L(p\otimes h))= p \, \widetilde{\operatorname{can}_P}(\ell(h))=p\otimes h.
\end{gather*}
Applying $\operatorname{id}\otimes\varepsilon$ to the splitting condition for $\ell$, we obtain $m\circ\ell=\varepsilon$, where $m$ is
the multiplication map on~$P$.
Combining it with the left colinearity of $\ell$, which implies that
\begin{gather*}
\delta\big(q_{(0)}\ell(q_{(1)})^{\langle 1\rangle}\big)\otimes\ell(q_{(1)})^{\langle 2\rangle}
=q_{(0)}\ell(q_{(3)})^{\langle 1\rangle}\otimes q_{(1)}S(q_{(2)})\otimes\ell(q_{(3)})^{\langle 2\rangle}\\
\hphantom{\delta\big(q_{(0)}\ell(q_{(1)})^{\langle 1\rangle}\big)\otimes\ell(q_{(1)})^{\langle 2\rangle}}{}
=q_{(0)}\ell(q_{(1)})^{\langle 1\rangle}\otimes 1\otimes\ell(q_{(1)})^{\langle 2\rangle},
\end{gather*}
we obtain
\begin{gather*}
L(\operatorname{can}_P(p\otimes q)) = \pi_B \big(pq_{(0)} \ell(q_{(1)})^{\langle 1\rangle}
 \otimes  \ell(q_{(1)})^{\langle 2\rangle}\big)
 = \pi_B \big(p \otimes q_{(0)} \ell(q_{(1)})^{\langle 1\rangle} \ell(q_{(1)})^{\langle 2\rangle}\big)
 \\
\hphantom{L(\operatorname{can}_P(p\otimes q))}{}
=  \pi_B(p\otimes q).
\end{gather*}
Finally, the bicolinearity of $\ell$ implies that the formula $s(p):= p_{(0)}\ell(p_{(1)})$ def\/ines a left $B$-linear
right $H$-colinear splitting
of the multiplication map $B\otimes P\to P$. The reverse implication (principality $\Rightarrow$ existence of a~bicolinear~$\ell$ with the splitting property)
follows from~\cite[Lemma~2.2]{bh04}.
\end{proof}

\section[Join and fusion of unital $C^*$-algebras]{Join and fusion of unital $\boldsymbol{C^*}$-algebras}

Let $\otimes_{\min}$ denote the spatial (minimal) tensor product.
Due to the fact that minimal tensor products preserve injections
 (e.g., see \cite[Proposition~4.22]{t-m79} or
\cite[Section~1.3]{w-s94}),
for any unital $C^*$-algebras $A_1$ and $A_2$ the natural maps
\begin{gather*}
A_1\ni a\longmapsto a\otimes 1\in A_1 \underset{\min}{\otimes} A_2\qquad \text{and}
\qquad  A_2\ni a\longmapsto 1\otimes a\in A_1 \underset{\min}{\otimes} A_2
\end{gather*}
are injective. This allows us to view $A_1$ and $A_2$ as subalgebras of
$A_1 \otimes_{\min} A_2$. We use this natural identif\/ication in what follows.
\begin{Definition}\label{gjoin}
Let $A_1$ and $A_2$  be unital $C^*$-algebras, and
let $C$ be a unital $C^*$-algebra equipped with two $C^*$-ideals $J_1$ and $ J_2$
such that the direct sum
$
\pi_1\oplus\pi_2\colon C\rightarrow (C/J_1) \oplus (C/J_2)
$
of the canonical surjections $\pi_i\colon C\to C/J_i$, $i\in \{1,2\}$, is surjective.
We call the unital $C^*$-algebra
\begin{gather*}
A_1 \underset{\pi}{\circledast} A_2 :=
\big\{x\in C \underset{\min}{\otimes} A_1 \underset{\min}{\otimes} A_2 \,
\big|\,
(\pi_i\otimes \operatorname{id})(x) \in
(C/J_i)\otimes_{\min}A_i
, \, i\in\{1,2\}
\big\}
\end{gather*}
the \emph{fusion} $C^*$-algebra of $A_1$ and~$A_2$ over~$C$.
When $C$ is the $C^*$-algebra $C([0,1])$ of all continuous complex-valued
functions on the unit interval and
$\pi_1:=\mathrm{ev}_1$ and $\pi_2:=\mathrm{ev}_0$ are
the evaluation maps respectively at $1$ and $0$,
we call $A_1 \circledast_\pi A_2$ the \emph{join} $C^*$-algebra of $A_1$ and~$A_2\,$, and denote it by
 $A_1 \circledast A_2$.
\end{Definition}

When $C$ is
a commutative $C^*$-algebra, the projections $\pi_i$  correspond to inclusions  of
two disjoint closed subsets (playing the role of points $0$ and $1$) into a compact Hausdorf\/f
space (playing the role of~$[0,1]$).
Observe also that,
if $A_1:=C(X)$ and $A_2 := C(Y)$ are the \mbox{$C^*$-algebras}
 of continuous functions on compact Hausdorf\/f spaces
$X$ and $Y$ respectively,
then
\begin{gather*}
A_1 \circledast A_2 = C(X * Y).
\end{gather*}
On the noncommutative side,
it is worth mentioning  that the join of matrix algebras is used to construct the celebrated
Jiang--Su $C^*$-algebra~\cite{js99}.

\section[Join and fusion of a principal $H$-comodule algebra with $H$]{Join and fusion of a principal $\boldsymbol{H}$-comodule algebra with $\boldsymbol{H}$}

Since a diagonal coaction is not in general an algebra homomorphism,
to obtain an equivariant version of our noncommutative fusion and join constructions,
we need to modify  Def\/inition~\ref{gjoin}
in the spirit of~\eqref{5},~\eqref{6}.
To avoid analytical complications that
are tackled in~\cite{ddhw}, we also need to change the setting from $C^*$-algebraic to algebraic. In particular,
we replace $\otimes_{\min}$ by~$\otimes$.

\begin{Definition}\label{pjoin}
Let $\delta\colon P\to P\otimes H$ be a right coaction making $P$ a comodule algebra, and
let~$C$ be a unital $C^*$-algebra equipped with two $C^*$-ideals $J_1$ and~$ J_2$
such that the direct sum
\begin{gather*}
\pi_1\oplus\pi_2\colon \ C\longrightarrow (C/J_1) \oplus (C/J_2)
\end{gather*}
of the canonical surjections $\pi_i\colon C\to C/J_i$, $i\in \{1,2\}$, is surjective. We call the algebra
\begin{gather*}
P \underset{\pi}{\overset{\delta}{\circledast}} H :=
\big\{x\in C \otimes P \otimes H \,
\big|\, \\
\hphantom{P \underset{\pi}{\overset{\delta}{\circledast}} H :=\big\{}{}
(\pi_1\otimes \operatorname{id})(x) \in (C/J_1)\otimes \delta (P) ,\,
(\pi_2\otimes \operatorname{id})(x) \in (C/J_2) \otimes \mathbb{C} \otimes H\big\}
\end{gather*}
the \emph{equivariant fusion} algebra of $P$ and~$H$ over~$C$.
When $C:=C([0,1])$  and
$\pi_1:=\mathrm{ev}_1$, $\pi_2:=\mathrm{ev}_0$,
we call $P \circledast^\delta_\pi H$ the \emph{equivariant join} algebra of $P$ and~$H$,
and denote it by $P \circledast^\delta H$.
\end{Definition}

\begin{Lemma}
The coaction $\operatorname{id}\otimes\operatorname{id}\otimes\Delta\colon
C\otimes P\otimes H\to C\otimes P\otimes H\otimes H$
restricts and corestricts to $\Delta_{P\circledast{}^\delta_\pi H}\colon P{\circledast}^\delta_\pi   H
\to (P{\circledast}^\delta_\pi H)\otimes H$ making $P{\circledast}^\delta_\pi H$
a right $H$-comodule algebra.
\end{Lemma}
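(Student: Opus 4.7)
The plan is to verify three things: that $P\circledast^\delta_\pi H$ is a unital subalgebra of the tensor product algebra $C\otimes P\otimes H$, that the map $\operatorname{id}\otimes\operatorname{id}\otimes\Delta$ sends it into $(P\circledast^\delta_\pi H)\otimes H$, and that the resulting coaction satisfies the $H$-comodule algebra axioms. The first and third points are essentially formal once the ambient algebraic structures are unpacked; the real content sits in the second.

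For the subalgebra claim, I would observe that $\pi_i\otimes\operatorname{id}$ is an algebra map from $C\otimes P\otimes H$ onto $(C/J_i)\otimes P\otimes H$, and that $\delta(P)$ is a subalgebra of $P\otimes H$ because $\delta$ is an algebra homomorphism. Hence $(C/J_1)\otimes\delta(P)$ and $(C/J_2)\otimes\mathbb{C}\otimes H$ are subalgebras of the respective codomains, so the two defining conditions of Definition~\ref{pjoin} are preserved under multiplication. Unitality follows from $\delta(1_P)=1_P\otimes 1_H$ and $\pi_i(1_C)=1$.

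For the restriction and corestriction, take $x\in P\circledast^\delta_\pi H$ and set $y:=(\operatorname{id}\otimes\operatorname{id}\otimes\Delta)(x)\in C\otimes P\otimes H\otimes H$. After identifying this fourfold tensor product with $(C\otimes P\otimes H)\otimes H$, I need to check that $y$ satisfies the two defining conditions in its first three slots uniformly in the fourth, which by a basis argument for the last factor is equivalent to $y\in(P\circledast^\delta_\pi H)\otimes H$. Writing $(\pi_1\otimes\operatorname{id})(x)=\sum_i a_i\otimes\delta(p_i)=\sum_i a_i\otimes p_{i(0)}\otimes p_{i(1)}$, the right-coaction axiom $(\delta\otimes\operatorname{id})\circ\delta=(\operatorname{id}\otimes\Delta)\circ\delta$ yields
\begin{gather*}
(\pi_1\otimes\operatorname{id}\otimes\operatorname{id}\otimes\operatorname{id})(y)=\sum_i a_i\otimes p_{i(0)}\otimes p_{i(1)(1)}\otimes p_{i(1)(2)}\\
=\sum_i a_i\otimes\delta(p_{i(0)})\otimes p_{i(1)},
\end{gather*}
which lies in $((C/J_1)\otimes\delta(P))\otimes H$, exactly as required. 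Symmetrically, if $(\pi_2\otimes\operatorname{id})(x)=\sum_j b_j\otimes 1\otimes h_j$, then applying $\operatorname{id}\otimes\operatorname{id}\otimes\Delta$ produces $\sum_j b_j\otimes 1\otimes h_{j(1)}\otimes h_{j(2)}\in((C/J_2)\otimes\mathbb{C}\otimes H)\otimes H$, trivially.

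Finally, coassociativity, counitality, and compatibility with multiplication for the restricted coaction are inherited verbatim from the corresponding properties of $\operatorname{id}\otimes\operatorname{id}\otimes\Delta$ on the ambient algebra, since these are identities between algebra maps that already hold on all of $C\otimes P\otimes H$. The main obstacle I expect is really a bookkeeping one: one must carefully distinguish $C\otimes P\otimes H\otimes H$ from its repackaging as $(C\otimes P\otimes H)\otimes H$, and recognize that the right-coaction axiom for $\delta$ is precisely what keeps the ``$\delta(P)$-part'' of the first defining condition intact when $\Delta$ is applied to the fourth tensor slot; everything else is routine.
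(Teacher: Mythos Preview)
Your proposal is correct and follows essentially the same approach as the paper: the key step in both is to observe that applying $\operatorname{id}\otimes\operatorname{id}\otimes\Delta$ commutes with $\pi_i\otimes\operatorname{id}$ and then to use the coassociativity identity $(\operatorname{id}\otimes\Delta)\circ\delta=(\delta\otimes\operatorname{id})\circ\delta$ to see that the image at $i=1$ lands in $(C/J_1)\otimes\delta(P)\otimes H$, while the $i=2$ case is immediate. The paper's proof records only this corestriction computation and leaves the subalgebra and comodule-algebra axiom checks implicit, so your write-up is simply a more explicit version of the same argument.
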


\begin{proof}
Note f\/irst that for $i\in\{1,2\}$, we have
\begin{gather*}
\big((\pi_i\otimes\operatorname{id}\otimes\operatorname{id})\circ(\operatorname{id}\otimes\operatorname{id}\otimes\Delta)\big)\!\bigg(\sum_j f_j\otimes p_j\otimes h_j\bigg)
=(\operatorname{id}\otimes\operatorname{id}\otimes\Delta)\!\bigg(\sum_j \pi_i(f_j) \otimes p_j\otimes h_j\bigg).
\end{gather*}
Assume now that $\sum_j f_j\otimes p_j\otimes h_j\in P\circledast^\delta_\pi H$.
 Then the above tensor belongs to
\begin{gather*}
(C/J_1)\otimes \big((\operatorname{id}\otimes\Delta)(\delta(P))\big)=
(C/J_1)\otimes \big((\delta\otimes\operatorname{id})(\delta(P))\big)
\subseteq(C/J_1)\otimes \delta(P)\otimes H
\end{gather*}
for $i=1$ and to $(C/J_2)\otimes \mathbb{C}\otimes H\otimes H$ for~$i=2$.
\end{proof}

The main result of this paper is:
\begin{Theorem}\label{main}
Let $C$ be a unital $C^*$-algebra equipped with two $C^*$-ideals $J_1$ and $ J_2$
such that the direct sum
$
\pi_1\oplus\pi_2\colon C\rightarrow (C/J_1) \oplus (C/J_2)
$
of the canonical surjections $\pi_i\colon C\to C/J_i$, $i\in \{1,2\}$, is surjective.
Then, for any principal right $H$-comodule algebra~$P$,
 the equivariant fusion right $H$-comodule algebra $P{\circledast}^\delta_\pi H$ is principal.
\end{Theorem}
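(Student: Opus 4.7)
The plan is to apply Lemma~\ref{general} and construct a (not-necessarily-unital) bicolinear linear map $\ell\colon H\to(P\circledast^\delta_\pi H)\otimes(P\circledast^\delta_\pi H)$ satisfying the splitting condition $\widetilde{\operatorname{can}}\circ\ell=1\otimes\operatorname{id}$. I would obtain $\ell$ by gluing two natural strong connections on the ``two ends'' of the fusion: the given strong connection $\ell_P\colon H\to P\otimes P$ on $P$, and the canonical strong connection $\ell_H(h):=S(h_{(1)})\otimes h_{(2)}$ on $H$ viewed as a principal right $H$-comodule algebra over itself. The gluing uses an algebraic ``partition of unity'' $c_1,c_2\in C$, obtained by first choosing $c_1\in C$ with $\pi_1(c_1)=1$ and $\pi_2(c_1)=0$ (possible by surjectivity of $\pi_1\oplus\pi_2$) and then setting $c_2:=1-c_1$, so that $\pi_1(c_2)=0$, $\pi_2(c_2)=1$, and $c_1+c_2=1_C$.

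First I would define the right $H$-colinear linear maps $\iota_1\colon P\to P\circledast^\delta_\pi H$ by $\iota_1(p):=c_1\otimes\delta(p)$ and $\iota_2\colon H\to P\circledast^\delta_\pi H$ by $\iota_2(h):=c_2\otimes 1_P\otimes h$. A short check using $\pi_i(c_j)=\delta_{ij}$ confirms that these images lie in the fusion, while $H$-colinearity follows from the triviality of the coaction on the $C$-factor together with the $H$-colinearity of $\delta$. I would then write $\ell(h)$ as a sum of two ``diagonal'' pieces $(\iota_1\otimes\iota_1)(\ell_P(h))$ and $(\iota_2\otimes\iota_2)(\ell_H(h))$, supplemented by ``cross'' pieces of type $\iota_1\otimes\iota_2$ and $\iota_2\otimes\iota_1$ built from $\ell_P$, $\delta$, and the antipode~$S$. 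Bicolinearity of each piece reduces directly to the bicolinearity of $\ell_P$ or $\ell_H$, since the $C$-factor is coaction-invariant.

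The main obstacle is the splitting condition. A direct computation using $\iota_1(a)\iota_1(b)=c_1^2\otimes\delta(ab)$ and $\iota_2(h)\iota_2(h')=c_2^2\otimes 1_P\otimes hh'$ yields
\[
\widetilde{\operatorname{can}}\bigl((\iota_1\otimes\iota_1)(\ell_P(h))\bigr)=c_1^2\otimes 1_P\otimes 1_H\otimes h,\qquad \widetilde{\operatorname{can}}\bigl((\iota_2\otimes\iota_2)(\ell_H(h))\bigr)=c_2^2\otimes 1_P\otimes 1_H\otimes h.
\]
Since $c_i$ is typically not idempotent in $C$, the sum of the two diagonal contributions falls short of $1_C\otimes 1_P\otimes 1_H\otimes h$ by exactly $2c_1c_2\otimes 1_P\otimes 1_H\otimes h$. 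Designing the cross pieces so that their $\widetilde{\operatorname{can}}$-images account for this defect is the technical crux: it requires a careful use of the left colinearity identity $\delta(\ell_P(h)^{\langle 1\rangle})\otimes\ell_P(h)^{\langle 2\rangle}=\ell_P(h_{(2)})^{\langle 1\rangle}\otimes S(h_{(1)})\otimes\ell_P(h_{(2)})^{\langle 2\rangle}$, in which the bijective antipode is essential, so that the Sweedler sums on the two sides of each cross contribution line up with $1_P\otimes 1_H\otimes h$. Once the cross pieces are specified, the identity $(c_1+c_2)^2=1_C$ delivers the splitting $\widetilde{\operatorname{can}}\circ\ell=1\otimes\operatorname{id}$, and Lemma~\ref{general} then yields the principality of $P\circledast^\delta_\pi H$.
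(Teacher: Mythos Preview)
Your two ``diagonal'' pieces are exactly the paper's formula: once you unwind $(\iota_1\otimes\iota_1)(\ell_P(h))$ and $(\iota_2\otimes\iota_2)(\ell_H(h))$ using the bicolinearity of $\ell_P$, you obtain
\[
c_1\otimes\ell_P(h_{(2)})^{\langle 1\rangle}\otimes S(h_{(1)})\otimes c_1\otimes\ell_P(h_{(2)})^{\langle 2\rangle}\otimes h_{(3)}
\;+\;
c_2\otimes 1\otimes S(h_{(1)})\otimes c_2\otimes 1\otimes h_{(2)},
\]
which is precisely the paper's $\tilde\ell(h)$ with $c_1,c_2$ in place of $\sqrt{t},\sqrt{1-t}$. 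So the overall shape of your argument matches the paper's.

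The gap is the ``technical crux'' you leave open. Cross pieces of the announced types cannot supply the missing $2c_1c_2\otimes 1_P\otimes 1_H\otimes h$. For an $\iota_2\otimes\iota_1$ term $\bigl(c_2\otimes 1\otimes k\bigr)\otimes\bigl(c_1\otimes\delta(p)\bigr)$ one computes
\[
\widetilde{\operatorname{can}}\bigl(\cdots\bigr)=c_1c_2\otimes p_{(0)}\otimes k\,p_{(1)}\otimes p_{(2)},
\]
and forcing this (summed over any choice of $k$'s and $p$'s built from $\ell_P$, $\delta$, $S$) to equal $c_1c_2\otimes 1\otimes 1\otimes h$ is impossible: applying $\operatorname{id}\otimes\operatorname{id}\otimes\varepsilon$ gives $p_{(0)}\otimes kp_{(1)}=\varepsilon(h)\,1\otimes 1$, hence $p\in\mathbb{C}$, and then the last leg collapses to~$1$ rather than~$h$. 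The $\iota_1\otimes\iota_2$ case fails analogously (the middle $H$-slot carries an unavoidable $h_{(1)}$). So the defect $2c_1c_2$ cannot be repaired by the cross terms you propose.

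What you are missing is that $C$ is a $C^*$-algebra, a hypothesis your argument never uses. The paper exploits it as follows: from any $x\in C$ with $(\pi_1\oplus\pi_2)(x)=(1,0)$ one forms $t:=f(x^*x)$ for a continuous $f\colon\operatorname{spec}(x^*x)\to[0,1]$ with $f(0)=0$, $f(1)=1$; then $\sqrt{t},\sqrt{1-t}\in C$ exist by functional calculus, satisfy $\pi_2(\sqrt{t})=0=\pi_1(\sqrt{1-t})$, and $(\sqrt{t})^2+(\sqrt{1-t})^2=1$. Taking $c_1:=\sqrt{t}$ and $c_2:=\sqrt{1-t}$ in your diagonal formula makes the defect vanish, so \emph{no cross terms are needed at all}, and Lemma~\ref{general} applies directly.
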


\begin{proof}
Note f\/irst that, due to  the surjectivity of $\pi_1\oplus\pi_2$, we can choose $x\in C$  such that
\begin{gather*}
(\pi_1\oplus\pi_2)(x)=(1,0).
\end{gather*}
Since $\{0, 1\}\subseteq \operatorname{spec}(x^*x)$, we can
 take a continuous function $f\colon \operatorname{spec}(x^*x)\to [0,1]$
such that $f(0)=0$ and $f(1)=1$, and def\/ine $t:=f(x^*x)\in C$.
Then there exist elements $\sqrt{t},\sqrt{1-t}\in C$ enjoying the properties
$\pi_2(\sqrt{t})=0=\pi_1(\sqrt{1-t})$.

Next, let
$\ell \colon H{\to} P\otimes P$  be a strong connection on $P$.
Then the bicolinearity of~$\ell$ implies that the linear map
\begin{gather*}
\tilde\ell \colon \  H \longrightarrow \big( C \otimes P \otimes H\big) \otimes \big( C \otimes P \otimes H\big),\\
\tilde\ell(h):= {\sqrt t} \otimes  \ell (h_{(2)})^{\langle 1\rangle}\otimes S(h_{(1)})
\otimes {\sqrt t} \otimes  \ell (h_{(2)})^{\langle 2\rangle}  \otimes h_{(3)}
\\
\hphantom{\tilde\ell(h):=}{}+ \sqrt{1-t} \otimes 1 \otimes S(h_{(1)}) \otimes \sqrt{1-t}
\otimes 1\otimes h_{(2)},
\end{gather*}
corestricts to $(P\circledast^\delta_\pi H)\otimes  (P\circledast^\delta_\pi H)$.
Indeed,
\begin{gather*}
 (\pi_1\otimes\operatorname{id}\otimes\operatorname{id})\otimes(\operatorname{id}\otimes\operatorname{id}\otimes\operatorname{id})(\widetilde{\ell}(h))\\
 \qquad{} =
\big([1]\otimes\delta\big(\ell(h_{(1)})^{\langle 1\rangle}\big)\big)\otimes
\big(\sqrt{t}\otimes \ell(h_{(1)})^{\langle 2\rangle}\otimes h_{(2)}\big)
\in \big(C/J_1\otimes\delta(P)\big)\otimes\big(C\otimes P\otimes H\big),\\
 (\pi_2\otimes\operatorname{id}\otimes\operatorname{id})\otimes(\operatorname{id}\otimes\operatorname{id}\otimes\operatorname{id})(\widetilde{\ell}(h))\\
 \qquad{} =
\big([1]\otimes 1\otimes S(h_{(1)}\big)\otimes
\big(\sqrt{1-t}\otimes 1\otimes h_{(2)}\big)
\in \big(C/J_2\otimes\mathbb{C}\otimes H\big)\otimes\big(C\otimes P\otimes H\big).
\end{gather*}
Much in the same way,
\begin{gather*}
 (\operatorname{id}\otimes\operatorname{id}\otimes\operatorname{id})\otimes(\pi_1\otimes\operatorname{id}\otimes\operatorname{id})(\widetilde{\ell}(h))
\in \big(C\otimes P\otimes H\big)\otimes\big(C/J_1\otimes\delta(P)\big),\\
 (\operatorname{id}\otimes\operatorname{id}\otimes\operatorname{id})\otimes(\pi_2\otimes\operatorname{id}\otimes\operatorname{id})(\widetilde{\ell}(h))
\in\big(C\otimes P\otimes H\big) \otimes\big(C/J_2\otimes\mathbb{C}\otimes H\big).
\end{gather*}

The bicolinearity of the thus corestricted $\tilde\ell$ is evident. Finally, taking
advantage of the right colinearity and the splitting property of~$\ell$, we check that
\begin{gather*}
 \widetilde{\operatorname{can}_{P\circledast^\delta_\pi H}}(\tilde\ell (h)) =
 t \otimes  \ell (h_{(2)})^{\langle 1\rangle}\ell (h_{(2)})^{\langle 2\rangle}
\otimes S(h_{(1)})  h_{(3)}\otimes h_{(4)}\\
\hphantom{\widetilde{\operatorname{can}_{P\circledast^\delta_\pi H}}(\tilde\ell (h)) =}{}
+ (1-t) \otimes 1 \otimes S(h_{(1)})h_{(2)} \otimes h_{(3)}
 \\
\hphantom{\widetilde{\operatorname{can}_{P\circledast^\delta_\pi H}}(\tilde\ell (h))}{} =  t \otimes  \ell (h_{(2)})^{\langle 1\rangle}\ell (h_{(2)})^{\langle 2\rangle}\!{}_{(0)} \otimes S(h_{(1)})
\ell (h_{(2)})^{\langle 2\rangle}\!{}_{(1)}\otimes h_{(3)}
+ (1-t) \otimes 1 \otimes 1 \otimes h
 \\
\hphantom{\widetilde{\operatorname{can}_{P\circledast^\delta_\pi H}}(\tilde\ell (h))}{} =  t \otimes  1 \otimes S(h_{(1)})  h_{(2)}\otimes h_{(3)}
+ (1-t) \otimes 1 \otimes 1 \otimes h
 \\
\hphantom{\widetilde{\operatorname{can}_{P\circledast^\delta_\pi H}}(\tilde\ell (h))}{}
 =  1 \otimes  1 \otimes 1 \otimes h.
\end{gather*}
Now the claim follows from Lemma~\ref{general}.
\end{proof}

Observe that in the special case of the join construction, we can take $t$ to be the inclusion map
$[0,1]\to\mathbb{C}$. Moreover, as explained in the next section, the equivariant join
comodule algebra $P\circledast^\delta H$
becomes piecewise trivial, and Theorem~\ref{main} follows from \cite[Lemma~3.2]{hkmz11}.
However, even in this special case of the equivariant join comodule algebra,
our non-unital strong-connection formula is dif\/ferently constructed than the complicated strong-connection
 formula in~\cite[Lemma~3.2]{hkmz11}. This might be very important for index pairing computations involving
concrete strong-connection formulas.
Finally, let us remark that a fully f\/ledged \mbox{$C^*$-algebraic} approach to Theorem~\ref{main}
can be found in~\cite{ddhw}.

\begin{Example}
In particular, taking $P=H$ to be the Hopf algebra of ${\rm SU}_q(2)$, we can conclude
the principality of Pf\/laum's noncommutative instanton
bundle~\cite{p-mj94}. The fact that this bundle is not trivializable at the $C^*$-algebraic level
 was proved in \cite{bdah} using~\cite{dhhmw12}.
In the classical setting,
except for the trivial group,
all joins $G*G$, where $G$ is a compact Hausdorf\/f group, are non-trivial as principal $G$-bundles. We
conjecture the same is true for all compact quantum groups~\cite{w-sl98}.
\end{Example}

\section{Piecewise structure}

This section is an adjustment of \cite[Section~5.1]{hkmz11} to the setting of  equivariant join construction.
It is important to bear in mind that we cannot expect to have such a piecewise (pullback) structure in the
general case of equivariant fusion comodule algebra.

Let $P$ be a principal right $H$-comodule algebra with a coaction~$\delta$.
First we def\/ine the following algebras
\begin{gather*}
  P_1:=\left\{x\in C([0,1]) \otimes P \otimes H \,
\big|\, (\mathrm{ev}_0\otimes \operatorname{id}\otimes \operatorname{id})(x) \in \mathbb{C}\otimes H) \right\},\nonumber\\
  P_2:=\left\{x\in C([0,1]) \otimes P \otimes H \,
\big|\, (\mathrm{ev}_1\otimes \operatorname{id}\otimes \operatorname{id})(x) \in \delta (P) \right\}.
\end{gather*}
The $P_i$'s become
$H$-comodule algebras for the coactions obtained by the restrictions and corestrictions of
\mbox{$\operatorname{id}\otimes \operatorname{id} \otimes \Delta$},
and the subalgebras of $H$-coaction invariants are respectively
\begin{gather*}
  B_1:=\{y \in C([0,1])\otimes P \,|\, (\mathrm{ev}_0\otimes \operatorname{id})(y) \in \mathbb{C}\},
		  \nonumber\\
  B_2:=\{y\in C([0,1])\otimes P \,|\, (\mathrm{ev}_1\otimes \operatorname{id})(y) \in P^{\operatorname{co}H}\}.
\end{gather*}
 Now one can identify $P$ with the pullback comodule algebra
\begin{gather*}
	\{(p,q) \in P_1 \oplus P_2\,|\,
	(\mathrm{ev}_1\otimes \operatorname{id})(p)=(\mathrm{ev}_0\otimes \operatorname{id})(q)\}
\end{gather*}
of the $P_i$'s along the right $H$-colinear algebra homomorphisms
\begin{gather*}
(\mathrm{ev}_1\otimes \operatorname{id})\colon \  P_1 \longrightarrow P\otimes H,\qquad
(\mathrm{ev}_0\otimes \operatorname{id})\colon \ P_2 \longrightarrow P\otimes H.
\end{gather*}

\subsection*{Acknowledgements}

All authors are grateful to Piotr M.~So{\l}tan and  Karen R.~Strung for references concerning
 the minimal tensor pro\-duct and the Jiang--Su $C^*$-algebra respectively.
 Ludwik D\c{a}browski and Piotr M.~Hajac  were  partially supported
by  PRIN 2010-11 grant ``Operator Algebras,
 Noncommutative Geometry and
Applications'' and NCN grant 2011/01/B/ST1/06474, respectively.
Tom Hadf\/ield was f\/inanced via the EU Transfer of Know\-ledge contract
MKTD-CT-2004-509794.  Also, Piotr M.~Hajac is very thankful to SISSA for its hospitality.

\pdfbookmark[1]{References}{ref}
\LastPageEnding

\end{document}